\theoremstyle{plain}
	\newtheorem{thm}{\protect\theoremname}
	\newtheorem{thm}{\protect\theoremname}[chapter]
\theoremstyle{definition}
\newtheorem{example}[thm]{\protect\examplename}
\theoremstyle{definition}
\newtheorem{defn}[thm]{\protect\definitionname}
\theoremstyle{remark}
\newtheorem{rem}[thm]{\protect\remarkname}
\theoremstyle{plain}
\newtheorem{cor}[thm]{\protect\corollaryname}
\theoremstyle{plain}
\newtheorem{prop}[thm]{\protect\propositionname}
\theoremstyle{plain}
\newtheorem{lem}[thm]{\protect\lemmaname}
\newcommand{\ga}[1]{{\textcolor{blue}{#1}}}
\providecommand{\corollaryname}{Corollary}
\providecommand{\definitionname}{Definition}
\providecommand{\examplename}{Example}
\providecommand{\lemmaname}{Lemma}
\providecommand{\propositionname}{Proposition}
\providecommand{\remarkname}{Remark}
\providecommand{\theoremname}{Theorem}
\begin{document}
\global\long\def\R{\mathbb{R}}%

\global\long\def\ii{\mathrm{i}}%

\global\long\def\N{\mathbb{N}}%

\global\long\def\C{\mathbb{C}}%

\global\long\def\supp{{\rm supp}}%

\global\long\def\span{\operatorname{span}}%

\global\long\def\ran{\operatorname{ran}}%

\global\long\def\rank{\operatorname{rank}}%

\global\long\def\div{\operatorname{div}}%

\global\long\def\H{\mathcal{H}}%

\global\long\def\k{\omega}%

\global\long\def\phi{\varphi}%

\global\long\def\epsilon{\varepsilon}%

\global\long\def\B{\mathcal{B}}%

\global\long\def\E{\mathcal{E}}%

\global\long\def\l{\ell}%

\global\long\def\nplusd{\ga{[\frac{d+n}{\alpha}]}}%

\global\long\def\e{\mathbf{e}}%

\global\long\def\one{\mathds{1}}%

\global\long\def\bo{\partial\Omega}%

\subjclass[2010]{35J25, 35B38, 35R30}
\title[Runge approximation and Whitney embedding in hybrid imaging]{Combining the Runge approximation and the Whitney embedding theorem
in hybrid imaging}
\author{Giovanni S. Alberti}
\address{Department of Mathematics, University of Genoa, Via Dodecaneso 35,
16146 Genova, Italy}
\email{giovanni.alberti@unige.it}
\author{Yves Capdeboscq}
\address{Université de Paris, CNRS, Sorbonne Université, Laboratoire Jacques-Louis
Lions UMR7598, Paris, France}
\email{capdeboscq@ljll.univ-paris-diderot.fr}
\date{8\textsuperscript{th} May 2019}
\begin{abstract}
This paper addresses enforcing non-vanishing constraints for solutions
to a second order elliptic partial differential equation by appropriate
choices of boundary conditions. We show that, in dimension $d\geq2$,
under suitable regularity assumptions, the family of $2d$ solutions
such that their Jacobian has maximal rank in the domain is both open
and dense. The case of less regular coefficients is also addressed,
together with other constraints, which are relevant for applications
to recent hybrid imaging modalities. Our approach is based on the
combination of the Runge approximation property and the Whitney projection
argument {[}Greene and Wu, Ann.\ Inst.\ Fourier (Grenoble), 25(1,
vii):215--235, 1975{]}. The method is very general, and can be used
in other settings.
\end{abstract}

\keywords{Runge approximation, Whitney reduction, hybrid imaging, coupled-physics
inverse problems, boundary control, non-zero constraints, photoacoustic
tomography, thermoacoustic tomography.}
\maketitle

\section{Introduction}

We consider a general second-order elliptic equation
\begin{equation}
Lu:=-\div(a\nabla u+bu)+c\cdot\nabla u+qu=0\quad\text{in }\Omega,\label{eq:PDE}
\end{equation}
where $\Omega\subseteq\R^{d}$, $d\ge2$, is a bounded and smooth
domain. We assume that $L$ is uniformly elliptic, namely,
\begin{equation}
a\left(x\right)\xi\cdot\xi\geq\lambda\left|\xi\right|^{2},\qquad\text{ a.e. }x\in\Omega,\,\xi\in\R^{d},\label{eq:coercive}
\end{equation}
for some $\lambda>0$. The parameters of equation (\ref{eq:PDE})
are assumed to satisfy mild regularity assumptions, namely either
\begin{equation}
a\in C^{\l-1,\alpha}\left(\overline{\Omega};\mathbb{R}^{d\times d}\right),\,b\in C^{\l-1,\alpha}(\overline{\Omega};\R^{d}),\,c\in W^{\l-1,\infty}(\Omega;\R^{d}),\,q\in W^{\l-1,\infty}(\Omega;\R),\label{eq:reg}
\end{equation}
with $\ell\geq1$ and $\alpha\in(0,1)$, or 
\begin{equation}
a\in L^{\infty}\left(\Omega;\mathbb{R}^{d\times d}\right),\,b,c\in L^{\infty}\left(\Omega;\mathbb{R}^{d}\right),q\in L^{\infty}\left(\Omega;\mathbb{R}^{d}\right),\label{eq:reg0}
\end{equation}
which will be referred to as $\ell=0$. By classical elliptic regularity
theory \cite{GILBARG-2001,Giaquinta2005,morrey-2008}, the solutions
to (\ref{eq:PDE}) belong to $C_{\mathrm{loc}}^{\l,\alpha}\left(\Omega;\R\right)$
and, provided that the boundary conditions are chosen in the appropriate
trace space, such a regularity extends up to the boundary, namely
$u\in C^{\l,\alpha}\left(\overline{\Omega};\R\right)$. In the case
$\l=0$, the H\"older exponent $\alpha$ is for example the one given
by the De Giorgi--Nash--Moser theorem.

This paper focuses on how to enforce pointwise constraints on the
solutions of (\ref{eq:PDE}). Our motivation for studying such a question
comes from hybrid imaging. Hybrid, or multi-physics, imaging problems
are a type of parameter identification problems that in many cases
involve the reconstruction of the coefficients of a PDE from the knowledge
of some internal functional of its solutions \cite{AMMARI-2008,WIDLAK-SCHERZER-2012,BAL-2012,kuchment-2012,ammari-2016,alberti-capdeboscq-2018}.

Amongst all these constraints, the most ubiquitous one is the non-vanishing
Jacobian problem. It can be reworded as follows: given $L$ as in
(\ref{eq:PDE}) and a compact set $K\subseteq\Omega$, how can one
choose boundary conditions $g_{1},\ldots,g_{N}$ such that
\begin{equation}
\rank\left(\nabla u_{1},\ldots,\nabla u_{N}\right)=d\text{ everywhere in }K,\label{eq:JacCon}
\end{equation}
where 
\[
\left\{ \begin{array}{ll}
Lu_{i}=0 & \text{in }\ensuremath{\Omega},\\
u_{i}=g_{i} & \text{on }\ensuremath{\bo},
\end{array}\right.\qquad i=1,\ldots,N?
\]
The difficulty here is that, apart from the fact that $a$, $b$,
$c$ and $q$ are relatively smooth and coercive, nothing is known
about these coefficients, which are the unknowns of the inverse problem.

When $d=2$, $b=0,c=0$ and $q=0$, that is, $L$ is simply 
\[
Lu=-\div(a\nabla u),
\]
it turns out that the Rad\'o--Kneser--Choquet Theorem can be extended
to this setting (without regularity assumptions) \cite{alessandrini-magnanini-1994,ALESSANDRINI-NESI-01,MR3364666,ALESSANDRINI-NESI-2015}.
Only two boundary conditions, independent of the matrix valued function
$a$, are required for the constraint to be satisfied globally. This
result cannot be extended to higher dimensions \cite{WOOD-1991,LAUGESEN-96,CAPDEBOSCQ-15,alberti-bal-dicristo-2016,alberti-capdeboscq-2018},
even locally: it is not possible to find suitable boundary conditions
independently of the (unknown) coefficient. For more general models,
when $b,c$ or $q$ are not null, such as the Helmholtz equation,
no solace can be found in any dimension, since the Rad\'o--Kneser--Choquet
Theorem, whose proof uses the maximum principle, does not apply.

One is therefore drawn to ask whether using a large number of boundary
conditions would help. Again, some counter-examples can be derived
to the most optimistic claims \cite[Corollary~6.18]{alberti-capdeboscq-2018};
nevertheless, it is possible to construct open sets of boundary conditions
valid for open sets of parameters for the relevant elliptic operator
$L$. Two main strategies have been used to achieve this goal: complex
geometrical optics (CGO) solutions \cite{TRIKI-2010,BAL-UHLMANN-2010,BAL-REN-UHLMANN-ZHOU-2011,BAL-REN-2011,AMMARI-CAPDEBOSCQ-DEGOURNAY-ROZANOVA-TRIKI-2011,KOCYIGIT-2012,BAL-MONARD-2012,BAL-2012,2012-MONARD-BAL-2,BAL-BONNETIER-MONARD-TRIKI-2013,BAL-UHLMANN-2013,ammari-2013,BAL-SCHOTLAND-2014,bal-guo-monard-2014}
and the Runge approximation property \cite{bal-guo-monard-2014,MONARD-BAL-2013,BAL-UHLMANN-2013,BAL-SCHOTLAND-2014}.
Other approaches based on frequency variations \cite{alberti-2013,alberti-2015,alberti-2015b,alberti-genericity,capalb-analytic,alberti-ammari-ruan-2016,AMMARI-GIOVANGIGLI-NGUYEN-SEO}
or dynamical systems \cite{BAL20131357} were also developed: these
are not discussed here.

CGO solutions are only available for isotropic coefficients $a$,
that is, $a=\gamma I_{d}$ where $\gamma$ is a real-valued function.
The CGO solution method provides a non-vanishing Jacobian globally
inside the domain for a suitable choice of ($d$ complex-valued) boundary
conditions. This approach requires high regularity assumptions on
the coefficients. On the other hand, the Runge approximation property
holds provided that the unique continuation property holds \cite{LAX-1956},
such a property being enjoyed by a much larger class of problems \cite{alessandrini-rondi-rosset-vessella-2009}.
A drawback is that the argument is local, applied on a covering of
the domain by small balls, and so many boundary conditions are needed.
Further, while CGO solutions are constructed (depending on the coefficients),
the Runge approximation provides an existence result of suitable solutions,
but not a constructive method to derive them.

In this work, we combine a Whitney projection argument \cite{whitney-1937},
as described in \cite{Greene-Wu-1975a,Greene-Wu-1975b}, with the
Runge approximation. Not only do we provide an explicit bound on the
number of boundary conditions to be considered, but we also obtain
that these constitute an open and dense set. For instance, the set
of $2d$ boundary conditions such that (\ref{eq:JacCon}) is satisfied
is open and dense in $H^{1/2}(\bo;\R)^{2d}$. Our result applies to
more general constraints than (\ref{eq:JacCon}), so that it is in
particular applicable to a variety of imaging problems (see section~\ref{sec:MR}
for details). Our result confirms what has been observed in numerical
simulations in the setting of scalar (isotropic) diffusion coefficients,
where good reconstructions are obtained for a relatively small set
of boundary conditions \cite{BAL-MONARD-2012,MONARD-BAL-2013,BAL20131357,BAL-UHLMANN-2013}.
After the first version of this manuscript was published, we were
made aware of the recent preprint \cite{cekic2018calder}, where similar
techniques are used for the fractional Calder\'on problem.

This paper is structured as follows. In section~\ref{sec:MR}, we
state our main results and discuss some open problems. Section~\ref{sec:runge}
is devoted to the Runge approximation property. Finally, in section~\ref{sec:Proofs}
we provide the proof of the main result.

\section{\label{sec:MR}Main results}

Let $K\subseteq\overline{\Omega}$ be a smooth compact set and
\[
\zeta\colon C^{{\l,\alpha}}(K)\to C^{{0,\alpha}}(K)^{n}
\]
be a continuous linear map, with $n\ge1$. Let $\H(K)$ denote the
set of solutions to (\ref{eq:PDE}) that are smooth in $K$, namely
\begin{equation}
\H(K)=\left\{ u\in C^{{\l,\alpha}}(K)\cap H^{1}(\Omega):Lu=0\;\text{in \ensuremath{\Omega}}\right\} ,\label{eq:defHK}
\end{equation}
equipped with the norm $\|u\|_{\H(K)}=\|u\|_{C^{{\l,\alpha}}(K)}+\|u\|_{H^{1}(\Omega)}$.
We are interested in solutions $u_{i}\in\H(K)$ satisfying the constraint
\begin{equation}
\det\begin{bmatrix}\zeta\left(u_{1}\right)\\
\vdots\\
\zeta\left(u_{n}\right)
\end{bmatrix}(x)\neq0,\label{eq:constraint}
\end{equation}
in $K$, locally or globally.
\begin{example}
\label{exa:Constraints-of-the}Constraints of the form (\ref{eq:constraint})
appear in various problems.
\begin{itemize}
\item When $n=1$, $\l={0}$ and $\zeta\left(u\right)=u$, the constraint
corresponds to avoiding nodal points, namely $u_{1}(x)\neq0$. This
is useful whenever a division by $u_{1}$ is required.
\item When $n=d$, $\l={1}$, and $\zeta\left(u\right)=\nabla u$ (taken
as row vector), the constraint imposes a non-vanishing Jacobian. In
that case, $\left(u_{1},\ldots,u_{d}\right)$ defines a local $C^{2}$
diffeomorphism. This is the case discussed in the introduction.
\item When $n=d+1$, $\l=1$, and $\zeta\left(u\right)=\begin{bmatrix}u & \nabla u\end{bmatrix}$
(taken as a row vector) the constraint imposes a non-vanishing ``augmented''
Jacobian. The additional potential $u$ may represent a scaled time
derivative, in a time harmonic model. This constraint may also correspond
to the non-vanishing Jacobian for $v_{i}=\frac{u_{i+1}}{u_{1}}$,
$i=1,\dots,d$.
\end{itemize}
Such constraints appear in quantitative photoacoustic tomography \cite{bal-2012-arxiv,BAL-UHLMANN-2013,BU-IP-10,BAL-REN-2011},
in quantitative thermoacoustic tomography \cite{BAL-REN-UHLMANN-ZHOU-2011,ammari-2013,alberti-2015b},
in acousto-electric tomography (also known as electrical impedance
tomography by elastic deformation or ultrasound modulated electrical
impedance tomography) \cite{ABCTF-SIAP-08,CAPDEBOSCQ-FEHRENBACH-DEGOURNAY-KAVIAN-09,2012-MONARD-BAL-2,BAL-BONNETIER-MONARD-TRIKI-2013,KS-IP-12,MONARD-BAL-2013,kuchment-2015},
in microwave imaging by elastic deformation \cite{AMMARI-CAPDEBOSCQ-DEGOURNAY-ROZANOVA-TRIKI-2011,alberti-2013,alessandrini-2014,TRIKI-2010},
in current density imaging \cite{hasanov-2004,lee-2004,NTT-Rev-11,bal-guo-monard-2014},
in dynamic elastography \cite{bal-2012-arxiv,bal-uhlmann-reconstructions-2012,BAL-UHLMANN-2013}
and in other hybrid imaging modalities. We refer to \cite{alberti-capdeboscq-2018}
for additional methods and further explanations on some of the models
we have mentioned.
\end{example}

We introduce the following notation.
\begin{defn}
The \emph{candidate set} $\mathcal{C}(K)$ is the set of all $x\in K$
for which there exist $\ensuremath{u_{1},\dots,u_{n}\in\H(K)}$ so
that 
\[
\det\begin{bmatrix}\zeta\left(u_{1}\right)\\
\vdots\\
\zeta\left(u_{n}\right)
\end{bmatrix}(x)\neq0.
\]
The \emph{admissible set} $\mathcal{E}(K)$ is the set of all $u=(u_{1},\dots,u_{\left[\frac{d+n}{\alpha}\right]})\in\H(K)^{\left[\frac{d+n}{\alpha}\right]}$
such that
\[
\sum_{i_{1,}\dots,i_{n}=1}^{\left[\frac{d+n}{\alpha}\right]}|\det\begin{bmatrix}\zeta\left(u_{i_{1}}\right)\\
\vdots\\
\zeta\left(u_{i_{n}}\right)
\end{bmatrix}(x)|>0,\text{ for all }x\in K.
\]
Here, $\left[\frac{d+n}{\alpha}\right]=\max\left\{ N\in\N:N\le\frac{d+n}{\alpha}\right\} $
denotes the integer part of $\frac{d+n}{\alpha}$.
\end{defn}

\begin{rem}
In other words, $u\in\H(K)^{\left[\frac{d+n}{\alpha}\right]}$ belongs
to $\mathcal{E}(K)$ if and only if for every $x\in K$ there exist
$i_{1},\dots,i_{n}\in\{1,\dots,\left[\frac{d+n}{\alpha}\right]\}$
such that
\[
\det\begin{bmatrix}\zeta\left(u_{i_{1}}\right)\\
\vdots\\
\zeta\left(u_{i_{n}}\right)
\end{bmatrix}(x)\neq0,
\]
namely, if and only if the desired constraint is satisfied everywhere
in $K$ for a suitable subset of the solutions $u_{1},\dots,u_{\left[\frac{d+n}{\alpha}\right]}$.
The candidate set is the subset of $K$ where satisfying the constraint
pointwise is possible at all. If $\mathcal{C}(K)\neq K$, then $\mathcal{E}\left(K\right)$
is empty.
\end{rem}

\begin{rem}
If the coefficients of the PDE are smooth enough, so that $\alpha>\frac{d+n}{d+n+1}$,
then the number of solutions is $d+n=\left[\frac{d+n}{\alpha}\right]$.
\end{rem}

In this general setting, we have the following result.
\begin{thm}
\label{thm:general}Take a smooth compact set $K\subseteq\overline{\Omega}$.
Assume that the candidate set satisfies 
\begin{equation}
\mathcal{C}\left(K\right)=K.\label{eq:assumption-runge}
\end{equation}
Then the admissible set $\mathcal{E}\left(K\right)$ is open and dense
in $\H\left(K\right){}^{\left[\frac{d+n}{\alpha}\right]}$.
\end{thm}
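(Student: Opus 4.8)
The plan is to prove openness and density separately; openness is elementary, and the density is the crux. Throughout, write $N:=[\tfrac{d+n}{\alpha}]$ and, for $u=(u_1,\dots,u_N)\in\H(K)^N$, let $\Theta_u\colon K\to\R^{nN}$ be the map $\Theta_u(x)=(\zeta(u_1)(x),\dots,\zeta(u_N)(x))$. Since $\zeta\colon\H(K)\to C^{0,\alpha}(K)^n$ is continuous and the determinant is polynomial, the functional
\[
\Psi(u):=\min_{x\in K}\ \sum_{i_1,\dots,i_n=1}^{N}\Bigl|\det\begin{bmatrix}\zeta(u_{i_1})\\ \vdots\\ \zeta(u_{i_n})\end{bmatrix}(x)\Bigr|
\]
is continuous on $\H(K)^N$ (the inner expression lands in $C(K)$, and $\min_K$ is continuous into $\R$). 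As $\E(K)=\{\Psi>0\}$, openness follows immediately.

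For density I would first reformulate the membership condition. A family of $N$ vectors in $\R^n$ contains an $n$-element linearly independent subfamily if and only if the whole family spans $\R^n$; hence $u\in\E(K)$ if and only if $\Theta_u(K)$ avoids the determinantal set $Z\subseteq\R^{nN}$ of $N$-tuples of rank $<n$, for which a standard computation gives $\operatorname{codim}Z=N-n+1$. Next, using the hypothesis $\mathcal{C}(K)=K$ together with compactness, I would construct a finite family $w_1,\dots,w_M\in\H(K)$ spanning at every point: each $x\in\mathcal{C}(K)$ admits $n$ solutions whose determinant is nonzero at $x$, this persists on a neighbourhood because $\zeta(w)\in C^{0,\alpha}(K)\subseteq C(K)$, and a finite subcover of $K$ yields $w_1,\dots,w_M$ with $\rank(\zeta(w_1)(x),\dots,\zeta(w_M)(x))=n$ for all $x\in K$.

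Given a target $u^\ast$ and $\epsilon>0$, I would then perturb inside the span of the $w_j$: set $u_i=u_i^\ast+\sum_{j=1}^M t_{ij}w_j$ with $t\in\R^{N\times M}$ small, so that $u\in\H(K)^N$ and $\|u-u^\ast\|_{\H(K)^N}\le C|t|$. Because the $w_j$ span at every point and the parameter $t_{ij}$ only moves the $i$-th block, the affine map $t\mapsto\Theta_u(x)$ is, for each fixed $x$, a submersion onto $\R^{nN}$; consequently the fibre $\{t:\Theta_u(x)\in Z\}$ has codimension $N-n+1$ in $\R^{N\times M}$. The obstruction set is then $\mathcal{B}=\{t:\Theta_u(x)\in Z\text{ for some }x\in K\}$, the projection onto the $t$-variable of the incidence set $I=\{(x,t):\Theta_u(x)\in Z\}$ fibred over $K$.

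The heart of the matter, and the main obstacle, is to show that $\mathcal{B}$ is Lebesgue-null, so that arbitrarily small admissible parameters exist. One cannot appeal to the smooth parametric Sard theorem, since $\Theta_u$ is only $C^{0,\alpha}$ in $x$; this is exactly where the Whitney projection argument of Greene and Wu \cite{whitney-1937,Greene-Wu-1975a,Greene-Wu-1975b} enters. The quantitative input is that a $C^{0,\alpha}$ map raises Hausdorff dimension by at most the factor $1/\alpha$. Writing $I$ locally, via the implicit function theorem in $t$, as the graph of a map that is $C^{0,\alpha}$ in the $d$ base directions of $K$ and Lipschitz in the $NM-(N-n+1)$ free fibre directions, a box-counting estimate yields
\[
\dim_{\mathcal{H}}\mathcal{B}\le\dim_{\mathcal{H}}I\le\tfrac{d}{\alpha}+NM-(N-n+1)<NM,
\]
where the final strict inequality uses $N-n+1>\tfrac{d}{\alpha}$ (the singular strata of $Z$ have higher codimension and contribute even less). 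Since $\alpha\in(0,1)$, the choice $N=[\tfrac{d+n}{\alpha}]$ guarantees $N-n+1>\tfrac{d}{\alpha}$; hence $\mathcal{B}$ has measure zero, almost every small $t$ produces $u\in\E(K)$ with $\|u-u^\ast\|_{\H(K)^N}<\epsilon$, and density follows.
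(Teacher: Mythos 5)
Your proposal is correct, and both halves match the paper in outcome, but your density argument takes a genuinely different route. The shared skeleton is: openness by compactness and continuity of $\zeta$ (identical to the paper); the use of $\mathcal{C}(K)=K$ plus a finite subcover to produce auxiliary solutions $w_1,\dots,w_M$ with $\operatorname{rank}(\zeta(w_1),\dots,\zeta(w_M))=n$ everywhere on $K$; perturbation of the target tuple by small linear combinations of these; and, crucially, the same measure-theoretic engine, namely that $C^{0,\alpha}$ maps raise Hausdorff dimension by at most $1/\alpha$, which is exactly what forces $N=\left[\frac{d+n}{\alpha}\right]$ (your inequality $N-n+1>\frac{d}{\alpha}$ does follow, since $N>\frac{d+n}{\alpha}-1$ gives $N-n+1>\frac{d}{\alpha}+n\frac{1-\alpha}{\alpha}>\frac{d}{\alpha}$). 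The difference is in how the ``almost every parameter works'' statement is organized. The paper performs a sequential Whitney--Greene--Wu reduction: it stacks $(h_1,\dots,h_N,u_1,\dots,u_M)$ and eliminates the auxiliary solutions one at a time via generic projections $P_a$, where the null set of bad directions $a\in\R^{k-1}$ is obtained (Lemma~\ref{lem:whitney}) by showing $\H^{k}(\operatorname{ran}f)=0$ for the H\"older map $f(x,v)=F_xv$ and then slicing; this needs only elementary linear algebra about injectivity and ranges of $F_x$ -- no implicit function theorem, no stratification. You instead do a one-shot parametric transversality argument: full parameter matrix $t\in\R^{N\times M}$, incidence set over the determinantal variety $Z$ of codimension $N-n+1$, and a dimension bound $\dim_{\mathcal{H}}\mathcal{B}\le\frac{d}{\alpha}+NM-(N-n+1)<NM$ obtained from a graph decomposition plus box counting. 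Your route is conceptually a single transversality step and makes the codimension mechanism explicit, but it is technically heavier: to be complete you must actually prove the implicit function theorem with merely $C^{0,\alpha}$ dependence on $x$ (the contraction-with-parameters argument, since the classical IFT does not apply), carry out the rank stratification of $Z$ (each stratum $Z_r$ is an embedded submanifold of codimension $(N-r)(n-r)\ge N-n+1$, covered by countably many graph charts via Lindel\"of), and justify that the box-counting bound on upper box dimension passes to Hausdorff dimension of the countable union. None of these would fail -- they are standard -- but they replace the paper's single, short slicing lemma, which is the main economy of the published argument.
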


\begin{rem}
Note that, since a finite intersection of open and dense sets is open
and dense, the result immediately extends to the case when finitely
many constraints are imposed simultaneously.
\end{rem}

In section~\ref{sec:runge} we observe, using the Runge Approximation
Property, that assumption (\ref{eq:assumption-runge}) is satisfied
for a large class of examples, since $\mathcal{C}(\overline{\Omega})=\overline{\Omega}$.

Our initial focus was on boundary value problems, since such problems
are relevant for non-invasive imaging methods, as explained in the
introduction. The following corollary is a rewording of our result
for boundary value problems.
\begin{cor}
\label{cor:general}Take a compact set $K\subseteq\Omega$. Suppose
that for every $g\in H^{1/2}(\bo)$ the problem
\begin{equation}
\left\{ \begin{array}{ll}
Lu=0 & \text{in }\ensuremath{\Omega},\\
u=g & \text{on }\ensuremath{\bo},
\end{array}\right.\label{eq:PDE with boundary}
\end{equation}
admits a unique solution $u^{g}\in H^{1}(\Omega).$ If (\ref{eq:assumption-runge})
holds true, then the set
\[
\left\{ \left(g_{1},\dots,g_{\left[\frac{d+n}{\alpha}\right]}\right)\in H^{1/2}(\bo)^{\left[\frac{d+n}{\alpha}\right]}:\left(u^{g_{1}},\dots,u^{g_{\left[\frac{d+n}{\alpha}\right]}}\right)\in\E(K)\right\} 
\]
is open and dense in $H^{1/2}(\bo)^{\left[\frac{d+n}{\alpha}\right]}$.
\end{cor}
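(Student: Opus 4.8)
The plan is to realize the set in the statement as the preimage of the admissible set $\E(K)$ under the Dirichlet solution operator, and then transport the openness and density furnished by Theorem~\ref{thm:general} back to $H^{1/2}(\bo)$. Writing $N=\left[\frac{d+n}{\alpha}\right]$, consider the map
\[
S\colon H^{1/2}(\bo)\to\H(K),\qquad S(g)=u^{g}.
\]
The first point to check is that $S$ is well defined with values in $\H(K)$: by hypothesis $u^{g}\in H^{1}(\Omega)$ exists and is unique, while interior elliptic regularity (recall $K\Subset\Omega$, so no boundary is involved) gives $u^{g}\in C^{\l,\alpha}_{\mathrm{loc}}(\Omega)$ and hence $u^{g}|_{K}\in C^{\l,\alpha}(K)$. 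Thus $u^{g}\in\H(K)$.

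Next I would prove that $S$ is a topological isomorphism onto $\H(K)$. Linearity follows from uniqueness (superposition of solutions); injectivity holds because the trace of $u^{g}$ on $\bo$ recovers $g$; and surjectivity holds because any $u\in\H(K)$ has a trace $g:=u|_{\bo}\in H^{1/2}(\bo)$ for which $u=u^{g}=S(g)$ by uniqueness. The target $\H(K)$ is a Banach space: a Cauchy sequence converges both in $H^{1}(\Omega)$ and in $C^{\l,\alpha}(K)$, the two limits coincide on $K$, and $Lu=0$ passes to the limit in the weak formulation. Boundedness of $S$ then follows either from the a priori estimate $\|u^{g}\|_{H^{1}(\Omega)}\le C\|g\|_{H^{1/2}(\bo)}$ combined with the interior Schauder bound $\|u^{g}\|_{C^{\l,\alpha}(K)}\le C\|u^{g}\|_{H^{1}(\Omega)}$ (using $K\Subset\Omega$), or from the closed graph theorem, since the graph of $S$ is closed by uniqueness and continuity of the trace. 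Having a continuous linear bijection between Banach spaces, the open mapping theorem guarantees that $S$ is a homeomorphism.

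With this in hand, the product map $S^{N}=S\times\cdots\times S\colon H^{1/2}(\bo)^{N}\to\H(K)^{N}$ is again a homeomorphism, and the set in the statement is exactly $(S^{N})^{-1}(\E(K))$. By Theorem~\ref{thm:general}, which applies because $\mathcal{C}(K)=K$ by assumption~\eqref{eq:assumption-runge}, the set $\E(K)$ is open and dense in $\H(K)^{N}$. A homeomorphism carries this property back to the preimage: openness is immediate from continuity, and density follows since for every nonempty open $V\subseteq H^{1/2}(\bo)^{N}$ the image $S^{N}(V)$ is open and nonempty, hence meets the dense set $\E(K)$, producing a point of $V\cap(S^{N})^{-1}(\E(K))$.

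I expect the only real subtlety to be the surjectivity of $S$ onto $\H(K)$ together with the completeness of $\H(K)$: these are exactly what make $S$ an \emph{open} map, and openness—not merely dense range—is what is needed to pull the density of $\E(K)$ back through $S^{N}$. Everything else (linearity, continuity, and the identification of the set as $(S^{N})^{-1}(\E(K))$) is routine.
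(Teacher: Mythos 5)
Your proof is correct and takes essentially the same route as the paper: both realize the set in question as the preimage $(S^{N})^{-1}(\E(K))$ of the admissible set under the component-wise Dirichlet solution operator, show this operator is a linear homeomorphism from $H^{1/2}(\bo)^{\left[\frac{d+n}{\alpha}\right]}$ onto $\H(K)^{\left[\frac{d+n}{\alpha}\right]}$, and pull back the openness and density supplied by Theorem~\ref{thm:general}. The only cosmetic difference is that the paper gets continuity of the inverse directly (it is the trace operator acting component-wise), whereas you optionally route through completeness of $\H(K)$ and the open mapping theorem; both are valid.
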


\begin{proof}
Consider the map 
\[
\psi\colon H^{1/2}(\bo)^{\left[\frac{d+n}{\alpha}\right]}\to H^{1}(\Omega)^{\left[\frac{d+n}{\alpha}\right]},\qquad\left(g_{1},\dots,g_{\left[\frac{d+n}{\alpha}\right]}\right)\mapsto\left(u^{g_{1}},\dots,u^{g_{\left[\frac{d+n}{\alpha}\right]}}\right),
\]
where $u^{g_{i}}$ is defined by (\ref{eq:PDE with boundary}). Because
problem (\ref{eq:PDE with boundary}) is well-posed, we have $\|u^{g}\|_{H^{1}(\Omega)}\le C(L)\|g\|_{H^{1/2}(\bo)}$
for every $g\in H^{1/2}(\bo)$. Further, because of our outstanding
regularity assumptions on the coefficients (\ref{eq:reg}) we also
have $\|u^{g}\|_{C^{{\l,\alpha}}(K)}\le C(L)\|g\|_{H^{1/2}(\bo)}$.
This shows that the map $\psi\colon H^{1/2}(\bo)^{\left[\frac{d+n}{\alpha}\right]}\to\mathcal{H}\left(K\right)^{\left[\frac{d+n}{\alpha}\right]}$
is continuous. Its inverse is given by the trace operator acting component-wise,
and is also continuous. In other words, $\psi$ is an isomorphism,
and the result immediately follows from Theorem~\ref{thm:general},
since the set under consideration is $\psi^{-1}(\E(K))$.
\end{proof}
\begin{rem}
Corollary~\ref{cor:general} was stated for simplicity only if $K$
is a proper subset of $\Omega$. When $K$ touches $\bo$, for instance
if $K=\overline{\Omega}$, the same result holds, provided that $H^{1/2}(\bo)$
is replaced with a suitable trace space consisting of smoother functions.
\end{rem}

\subsection*{Future perspectives}

The regularity assumptions we made are important in all generality,
because we use a Unique Continuation Principle argument. For a specific
problem, with a given geometry and/or coefficient structure, appropriate
extension can often be envisioned (see e.g.\  \cite{BALL-CAPDEBOSCQ-TSERING-2012}
for a strategy on how to handle a large class of piecewise regular
coefficients). We have limited ourselves to elliptic PDE with real
coefficients. Considering the case of complex valued coefficients
(which appear in thermo-acoustic tomography \cite{BAL-REN-UHLMANN-ZHOU-2011,ammari-2013})
is a natural extension of this work. Maxwell's equation \cite{seo-kim-etal-2012,BAL-GUO-2013,bal-zhou-2014,alberti-2015,alberti-2015b}
and linear elasticity \cite{barbone-etal-2010,mclaughlin-zhang-manduca-2010,lai-2014,bal-bellis-imperiale-monard-2014,bal-monard-uhlmann-2015}
are not considered here and are natural frameworks where this method
could be applied. Finally, note that while a rough description of
our result could be that a ``random'' choice of $\left[\frac{d+n}{\alpha}\right]$
boundary condition suffices, we have not established such a claim.
It would be interesting to move from an open and dense set of admissible
boundary conditions to a random choice of boundary conditions with
high probability (or indeed probability $1$).

\section{\label{sec:runge}The Runge approximation property and assumption
(\ref{eq:assumption-runge})\label{sec:The-Runge-approximation}}

For simplicity of exposition, in this section we restrict ourselves
to considering only the constraints associated to the maps $\zeta$
given in Example~\ref{exa:Constraints-of-the}, namely:
\begin{itemize}
\item $n=1$, $\l={0}$, $\zeta\left(u\right)=u$;
\item $n=d$, $\l={1}$, $\zeta\left(u\right)=\nabla u$;
\item or $n=d+1$, $\l={1}$, $\zeta\left(u\right)=\begin{bmatrix}u & \nabla u\end{bmatrix}$.
\end{itemize}
However, with minor modifications to the argument, many other constraints
can be considered, since this approach is very general. The main tool
to satisfy (\ref{eq:assumption-runge}), namely to show that there
always exist global solutions satisfying the desired constraints locally,
is the following result: it is sufficient to build suitable solutions
of the PDE with constant coefficients, and without lower order terms.
\begin{prop}
\label{prop:runge}Let $L$ be the elliptic operator defined in (\ref{eq:PDE}).
In addition to (\ref{eq:coercive}), (\ref{eq:reg}) if $\l=1$ and
(\ref{eq:reg0}) if $\l=0$, assume that $a(x)$ is a symmetric matrix
for every $x\in\overline{\Omega}$ and, if $d\ge3$, $a\in C^{0,1}\left(\overline{\Omega};\R^{d\times d}\right)$.
Take $x_{0}\in\overline{\Omega}$. Let $r>0$ and $u_{1},\dots,u_{n}\in C^{\l,\alpha}(\overline{\Omega};\R)$
be solutions to the constant coefficient problem
\[
-\div\left(a\left(x_{0}\right)\nabla u_{i}\right)=0\quad\text{in }B(x_{0},r),\qquad i=1,\dots,n.
\]
If

\begin{equation}
\det\begin{bmatrix}\zeta\left(u_{1}\right)\\
\vdots\\
\zeta\left(u_{n}\right)
\end{bmatrix}(x_{0})\neq0,\label{eq:x_0}
\end{equation}
then $x_{0}\in\mathcal{C}(\overline{\Omega})$.
\end{prop}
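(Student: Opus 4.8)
The plan is to combine a local construction of solutions of $L$ near $x_0$ with a global approximation step, the whole argument being steered by the observation that the determinant condition only constrains the $1$-jets at $x_0$. For each of the three maps $\zeta$ under consideration the value $\zeta(u)(x_0)$ depends only on $u(x_0)$ and $\nabla u(x_0)$, so $(w_1,\dots,w_n)\mapsto\det[\zeta(w_1);\dots;\zeta(w_n)](x_0)$ is a continuous function of the $1$-jets at $x_0$; since it is nonzero for the given $u_i$ and nonvanishing is an open condition, it suffices to produce global solutions $w_i\in\H(\overline\Omega)$ of $Lw_i=0$ whose values and gradients at $x_0$ are arbitrarily close to those of the $u_i$. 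I would carry this out for $x_0\in\Omega$; the case $x_0\in\bo$ is analogous, working on $B(x_0,\rho)\cap\Omega$ and invoking the regularity up to the boundary.

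First I would build local solutions by freezing the coefficients. Writing $L_0=-\div(a(x_0)\nabla\cdot)$, the $u_i$ solve $L_0u_i=0$ on $B_\rho:=B(x_0,\rho)$ for $\rho\le r$, and---crucially---constants and affine functions lie in $\ker L_0$. For $\rho$ small $L$ is coercive on $H^1_0(B_\rho)$ (the lower-order terms being absorbed via the Poincaré inequality, whose constant is $O(\rho)$), so one may solve $Lv_i=0$ in $B_\rho$ with $v_i=u_i$ on $\partial B_\rho$ and set $z_i:=v_i-u_i$, which solves $Lz_i=-Lu_i$ with zero boundary data. Since $L_0u_i=0$, the forcing is $-Lu_i=\div\!\big((a-a(x_0))\nabla u_i+bu_i\big)-c\cdot\nabla u_i-qu_i$. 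Rescaling by $Z_i(y):=z_i(x_0+\rho y)$, $U_i(y):=u_i(x_0+\rho y)$ on $B_1:=B(0,1)$ turns this into $L^\rho Z_i=-L^\rho U_i$, $Z_i|_{\partial B_1}=0$, where $L^\rho$ has principal part $a(x_0+\rho\cdot)$ and lower-order coefficients carrying powers of $\rho$. The hard part is the estimate $\|L^\rho U_i\|_{H^{-1}(B_1)}=o(\rho)$: after rescaling, $(a-a(x_0))\nabla u_i\to0$ in $L^2(B_1)$ by continuity of $a$, while $bu_i$ converges in $L^2(B_1)$ to the constant vector $b(x_0)u_i(x_0)$, whose divergence vanishes, so its divergence-form contribution to $L^\rho U_i$ is $o(\rho)$; the zeroth-order terms contribute $O(\rho^2)$. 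This gain from the naive $O(\rho)$ to $o(\rho)$ is exactly what freezing $a$ at $x_0$ and having constants in $\ker L_0$ buys; without it the gradient correction below would only be $O(1)$.

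Granting this estimate, the uniform coercivity of $L^\rho$ gives $\|Z_i\|_{H^1(B_1)}=o(\rho)$, and interior Schauder estimates when $\ell=1$ (respectively De Giorgi--Nash--Moser estimates when $\ell=0$) for $L^\rho$---uniform in $\rho$ because the rescaled coefficients are uniformly bounded, the Hölder seminorm of $a(x_0+\rho\cdot)$ being $O(\rho^\alpha)$---yield $\|Z_i\|_{C^{\ell,\alpha}(B_{1/2})}=o(\rho)$. Undoing the scaling, $|z_i(x_0)|=o(\rho)$ and $|\nabla z_i(x_0)|=\rho^{-1}|\nabla_y Z_i(0)|=o(1)$, so the $1$-jet of $v_i$ at $x_0$ converges to that of $u_i$ as $\rho\to0$. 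Finally I would globalize: fixing $\rho$ small enough that $\det[\zeta(v_1);\dots;\zeta(v_n)](x_0)\neq0$ and that $B_\rho$ is a Runge domain in $\Omega$ (so that $\Omega\setminus\overline{B_\rho}$ has no connected component compactly contained in $\Omega$), the Runge approximation property for $L$---which holds under the stated regularity and symmetry assumptions via the unique continuation principle---lets me approximate each $v_i$ in $L^2(B_\rho)$ by a global solution $w_i$ of $Lw_i=0$ in $\Omega$, chosen with smooth boundary data so that $w_i\in C^{\ell,\alpha}(\overline\Omega)\cap H^1(\Omega)$ and hence $w_i\in\H(\overline\Omega)$. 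Because $w_i-v_i$ solves the homogeneous equation in $B_\rho$, interior regularity upgrades $L^2(B_\rho)$-smallness to $C^{\ell,\alpha}(B_{\rho/2})$-smallness, so the jets of the $w_i$ at $x_0$ stay close to those of the $v_i$, whence $\det[\zeta(w_1);\dots;\zeta(w_n)](x_0)\neq0$ and $x_0\in\mathcal C(\overline\Omega)$.
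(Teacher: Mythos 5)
Your proposal follows the same route as the paper's (sketched) proof: (i) construct local solutions $v_i$ of $Lv_i=0$ near $x_0$ close to the constant-coefficient solutions $u_i$, by freezing the coefficients and solving a small Dirichlet problem; (ii) pass to global solutions $w_i$ via the Runge approximation property (equivalent to unique continuation), upgrading $L^2$-closeness to closeness of the jets at $x_0$ by interior regularity; (iii) conclude because the determinant in (\ref{eq:x_0}) depends continuously on the $1$-jets at $x_0$, so non-vanishing is stable. Your rescaling argument in step (i) is a quantitative version of what the paper delegates to \cite{BAL-UHLMANN-2013} and \cite[Section~7.3]{alberti-capdeboscq-2018}, and your observation that constants and affine functions lie in $\ker L_0$ (so that the naive $O(\rho)$ forcing improves to $o(\rho)$, which is what makes the \emph{gradient} correction $o(1)$) is exactly the mechanism those references exploit. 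The loose ends you leave (the boundary case $x_0\in\partial\Omega$, and arranging that the global solutions actually lie in $C^{\ell,\alpha}(\overline{\Omega})\cap H^{1}(\Omega)$, hence in $\mathcal{H}(\overline{\Omega})$) are glossed over at essentially the same level as in the paper's own sketch.

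There is, however, one step that fails as written: the case $\ell=0$. Your key estimate $\|L^{\rho}U_i\|_{H^{-1}(B_1)}=o(\rho)$ is justified by ``continuity of $a$'' and by the convergence of $bu_i$ to the constant vector $b(x_0)u_i(x_0)$. Under (\ref{eq:reg0}) the coefficient $b$ is merely $L^{\infty}$ (and when $d=2$ so is $a$), so neither claim holds --- $b(x_0)$ need not even be a Lebesgue value --- and the $o(\rho)$ gain is lost. The fix is to treat the two regularity regimes separately, which your argument almost does but not quite: for $\ell=1$ the assumptions (\ref{eq:reg}) give $a,b\in C^{0,\alpha}$, and your Schauder-based $o(\rho)$ argument is correct; for $\ell=0$ the constraint is $\zeta(u)=u$, so only the \emph{value} $v_i(x_0)$, not the gradient, must be controlled, and for that the crude bound $\|L^{\rho}U_i\|_{H^{-1}(B_1)}=O(\rho)$ (which uses only boundedness of the coefficients together with $\|\nabla_y U_i\|_{L^{\infty}(B_1)}=O(\rho)$) already yields $\|Z_i\|_{H^{1}(B_1)}=O(\rho)$ and, by De Giorgi--Nash--Moser local boundedness, $|z_i(x_0)|=O(\rho)=o(1)$, which suffices. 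With that case split made explicit, your proof is complete at the same level of rigour as the paper's.
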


\begin{rem}
This result can in some cases be extended to operators $L$ with piecewise
Lipschitz coefficients with possibly countably many pieces, following
the strategy given in \cite{BALL-CAPDEBOSCQ-TSERING-2012}.
\end{rem}

\begin{proof}
This result, even though not in this exact form, was first derived
in \cite{BAL-UHLMANN-2013}, and later discussed in \cite[Section~7.3]{alberti-capdeboscq-2018}
(only in the case $x_{0}\in\Omega$). Here we provide only a sketch
of the proof in order to highlight the main features; the reader is
referred to the references mentioned for the details of the argument.

The proof is split into three steps.

\emph{Step 1: approximation of $u_{i}$ with local solutions $v_{i}$
to $Lv_{i}=0$. }Using standard elliptic regularity estimates, it
is possible to find $\tilde{r}\in(0,r]$ and $v_{i}\in H^{1}(B(x_{0},\tilde{r})\cap\Omega)$
such that $Lv_{i}=0$ in $B(x_{0},\tilde{r})\cap\Omega$ and $\|u_{i}-v_{i}\|_{C^{1}\left(\overline{B(x_{0},\tilde{r})}\cap\overline{\Omega}\right)}$
is arbitrarily small (provided that $\tilde{r}$ is chosen small enough).
It is worth observing that, even if in \cite{BAL-UHLMANN-2013} the
lower order terms are kept in the PDE with constant coefficients,
that is not needed \cite[Proposition~7.10]{alberti-capdeboscq-2018}.

\emph{Step 2: approximation of $v_{i}$ with global solutions $w_{i}$
to $Lw_{i}=0$. }Thanks to the regularity assumptions on the coefficients,
the elliptic operator $L$ enjoys the unique continuation property
\cite{alessandrini-rondi-rosset-vessella-2009}. This is equivalent
to the Runge approximation property \cite{LAX-1956}, by which it
is possible to approximate local solutions with global solutions.
Thus, in our setting, there exist $w_{i}\in H^{1}(\Omega)$ solutions
to $Lw_{i}=0$ in $\Omega$ such that $\|w_{i}-v_{i}\|_{H^{1}(B(x_{0},\tilde{r})\cap\Omega)}$
is arbitrarily small. By elliptic regularity, we can ensure that $\|w_{i}-v_{i}\|_{C^{1}\left(\overline{B(x_{0},\tilde{r}/2)}\cap\overline{\Omega}\right)}$
is arbitrarily small too.

\emph{Step 3: $(w_{1},\dots,w_{n})$ satisfy the constraint in $x_{0}$.
}Combining the previous steps, we have that $\|u_{i}-w_{i}\|_{C^{1}\left(\overline{B(x_{0},\tilde{r}/2)}\cap\overline{\Omega}\right)}$
is arbitrarily small. For the maps $\zeta$ considered above, this
immediately implies that $|\zeta(u_{i})-\zeta(w_{i})|(x_{0})$ is
arbitrarily small. Thus, by (\ref{eq:x_0}), $\tilde{r}$ and $w_{i}$
may be chosen in such a way that
\[
\det\begin{bmatrix}\zeta\left(w_{1}\right)\\
\vdots\\
\zeta\left(w_{n}\right)
\end{bmatrix}(x_{0})\neq0,
\]
which shows that $x_{0}\in\mathcal{C}(\overline{\Omega})$.
\end{proof}
Let us now verify that for the maps $\zeta$ mentioned above, we always
have $\mathcal{C}(\overline{\Omega})=\overline{\Omega}$; in other
words, the assumptions of Theorem~\ref{thm:general} are satisfied
with $K=\overline{\Omega}$.
\begin{cor}
\label{cor:runge}Let $L$ be the elliptic operator defined in (\ref{eq:PDE}).
In addition to (\ref{eq:coercive}), (\ref{eq:reg}) if $\l=1$ and
(\ref{eq:reg0}) if $\l=0$, assume that $a(x)$ is a symmetric matrix
for every $x\in\overline{\Omega}$ and, if $d\ge3$, $a\in C^{0,1}\left(\overline{\Omega};\R^{d\times d}\right)$.
If $\zeta$ is one of the maps considered in Example~\ref{exa:Constraints-of-the},
then $\mathcal{C}(K)=K$ for any $K\subseteq\overline{\Omega}$.
\end{cor}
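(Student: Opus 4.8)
The plan is to reduce the statement to Proposition~\ref{prop:runge} by exhibiting, for each of the three maps $\zeta$, explicit solutions of the constant-coefficient equation $-\div(a(x_0)\nabla u)=0$ that realise \eqref{eq:x_0}, and then to pass from $K=\overline{\Omega}$ to an arbitrary compact $K\subseteq\overline{\Omega}$ by restriction. First I would fix $x_0\in\overline{\Omega}$ and write $A=a(x_0)$, which is symmetric and positive definite by hypothesis. The crucial observation is that every affine function $u(x)=c+\ell\cdot x$ (with $c\in\R$, $\ell\in\R^d$) solves $-\div(A\nabla u)=-\div(A\ell)=0$ on any ball, since $A\ell$ is constant, and is of class $C^{\l,\alpha}(\overline{\Omega})$ for $\l\in\{0,1\}$. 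It then remains to choose affine solutions whose $\zeta$-rows are linearly independent at $x_0$. When $n=1$ and $\zeta(u)=u$, take $u_1\equiv1$, so $\zeta(u_1)(x_0)=1$. When $n=d$ and $\zeta(u)=\nabla u$, take $u_i(x)=x_i$, so that $\nabla u_i=e_i$ and the Jacobian at $x_0$ is the identity. When $n=d+1$ and $\zeta(u)=\begin{bmatrix}u&\nabla u\end{bmatrix}$, take $u_1\equiv1$ together with $u_{i+1}(x)=x_i$ for $i=1,\dots,d$; the resulting $(d+1)\times(d+1)$ matrix at $x_0$ is block lower triangular with diagonal blocks $1$ and $I_d$. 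In all three cases the determinant equals $1$, so \eqref{eq:x_0} holds and Proposition~\ref{prop:runge} gives $x_0\in\mathcal{C}(\overline{\Omega})$. As $x_0$ is arbitrary, $\mathcal{C}(\overline{\Omega})=\overline{\Omega}$.

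To conclude for a general compact $K\subseteq\overline{\Omega}$, note that $\mathcal{C}(K)\subseteq K$ by definition, so only the inclusion $K\subseteq\mathcal{C}(K)$ is needed. Any $u\in\H(\overline{\Omega})$ restricts to an element of $\H(K)$, since $C^{\l,\alpha}(\overline{\Omega})\subseteq C^{\l,\alpha}(K)$ while the conditions $u\in H^1(\Omega)$ and $Lu=0$ in $\Omega$ are unaffected by passing to $K$. Hence, given $x_0\in K\subseteq\overline{\Omega}=\mathcal{C}(\overline{\Omega})$, the solutions $u_1,\dots,u_n\in\H(\overline{\Omega})$ realising \eqref{eq:constraint} at $x_0$ also lie in $\H(K)$ and still satisfy the determinant condition at $x_0\in K$, so $x_0\in\mathcal{C}(K)$. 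This gives $K\subseteq\mathcal{C}(K)$ and therefore $\mathcal{C}(K)=K$.

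All the analytic weight of the statement is carried by Proposition~\ref{prop:runge}, through the unique continuation and Runge approximation arguments it invokes; for the corollary itself there is no serious obstacle. The only points requiring attention are that the chosen test functions genuinely solve the \emph{constant-coefficient} equation, which the affine ansatz guarantees immediately, and that the rows $\zeta(u_i)(x_0)$ be linearly independent, which holds by the explicit determinant computations above. The restriction step is then a routine check against the definitions of $\H(K)$ and $\mathcal{C}(K)$.
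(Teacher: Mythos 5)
Your proposal is correct and follows essentially the same route as the paper: exhibit constant and coordinate affine functions as solutions of $-\div(a(x_0)\nabla u)=0$, check that the determinant in \eqref{eq:x_0} equals $1$ in each of the three cases, and invoke Proposition~\ref{prop:runge}. Your explicit restriction step from $\mathcal{C}(\overline{\Omega})=\overline{\Omega}$ to $\mathcal{C}(K)=K$ (via $\H(\overline{\Omega})\subseteq\H(K)$) is a routine verification that the paper leaves implicit, and is a welcome clarification rather than a deviation.
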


\begin{proof}
We consider the three constraints separately:
\begin{itemize}
\item $n=1$, $\l={0}$, $\zeta\left(u\right)=u$: set $u_{1}=1$.
\item $n=d$, $\l={1}$, $\zeta\left(u\right)=\nabla u$: set $u_{1}=x_{1},\dots,u_{n}=x_{n}$.
\item $n=d+1$, $\l={1}$, $\zeta\left(u\right)=\begin{bmatrix}u & \nabla u\end{bmatrix}$:
set $u_{1}=1,u_{2}=x_{1},\dots,u_{n+1}=x_{n}$.
\end{itemize}
Given $x_{0}\in\overline{\Omega}$, for any $a(x_{0})$, there holds
\[
-\div(a(x_{0})\nabla u_{i})=0\quad\text{in }\R^{d},\qquad i=1,\dots,n,
\]
and 
\[
\det\begin{bmatrix}\zeta\left(u_{1}\right)\\
\vdots\\
\zeta\left(u_{n}\right)
\end{bmatrix}(x_{0})\neq0.
\]
The conclusion follows from Proposition~\ref{prop:runge}.
\end{proof}

\section{\label{sec:Proofs}Proof of Theorem~\ref{thm:general}}

We need two lemmata. For $k\ge2$ and $a\in\R^{k-1}$ let $P_{a}\colon\R^{k}\to\R^{k-1}$
denote the linear map given by
\[
P_{a}(y)=\left(y_{1}-a_{1}y_{k},\dots,y_{k-1}-a_{k-1}y_{k}\right).
\]
In the following, we shall identify the matrices in $\R^{k\times n}$
with $k$ rows and $n$ columns with the linear maps from $\R^{n}$
into $\R^{k}$. We shall denote the Lebesgue measure in $\R^{m}$
by $|\cdot|_{m}$.
\begin{lem}
\label{lem:whitney}Take a smooth and compact set $K\subseteq\R^{d}$
and a positive integer $k>{\frac{d+n}{\alpha}}$. Let $F\colon K\to\R^{k\times n}$
be of class $C^{{0,\alpha}}$ and such that $F_{x}\colon\R^{n}\to\R^{k}$
is injective for all $x\in K$. Let $G\subseteq\R^{k-1}$ be the set
of those $a\in\R^{k-1}$ for which $P_{a}\circ F_{x}\colon\R^{n}\to\R^{k-1}$
is injective for all $x\in K$, namely
\[
G=\bigcap_{x\in K}\left\{ a\in\R^{k-1}:P_{a}\circ F_{x}\colon\R^{n}\to\R^{k-1}\;\text{is injective}\right\} .
\]
Then $|\R^{k-1}\setminus G|_{k-1}=0$.
\end{lem}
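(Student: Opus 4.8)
The plan is to cover $\R^{k-1}\setminus G$ by the image of a single $\alpha$-Hölder map whose domain has dimension $d+n-1$, and then to invoke the classical fact that an $\alpha$-Hölder map raises Hausdorff dimension by a factor of at most $1/\alpha$; the hypothesis $k>\frac{d+n}{\alpha}$, together with $\alpha<1$, is exactly what forces the resulting image to be Lebesgue-null. First I would reformulate the loss of injectivity. Since $P_a$ annihilates $(a,1)\in\R^k$ and $\ker P_a=\span\{(a,1)\}$ is one-dimensional, the map $P_a\circ F_x$ fails to be injective if and only if $\ran F_x$ meets this line nontrivially, i.e.\ if and only if there are $\xi\in S^{n-1}$ and $t\in\R$ with $F_x\xi=t\,(a,1)$. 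As $F_x$ is injective, $F_x\xi\neq 0$, hence $t=(F_x\xi)_k\neq 0$, and reading off the remaining coordinates gives $a_j=(F_x\xi)_j/(F_x\xi)_k$. Setting
\[
U=\{(x,\xi)\in K\times S^{n-1}:(F_x\xi)_k\neq0\},\qquad \Theta(x,\xi)=\frac{1}{(F_x\xi)_k}\left((F_x\xi)_1,\dots,(F_x\xi)_{k-1}\right),
\]
we obtain the inclusion $\R^{k-1}\setminus G\subseteq\Theta(U)$. The decisive point is the passage to $\xi\in S^{n-1}$: because $\Theta$ is homogeneous of degree zero in $\xi$, no bad value of $a$ is lost, yet the domain has dimension $d+n-1$ rather than $d+n$. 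This single unit of codimension is precisely what the counting below needs.

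Next I would record the regularity of $\Theta$. Since $F\in C^{0,\alpha}(K)$, the map $(x,\xi)\mapsto F_x\xi$ is $\alpha$-Hölder on the compact set $K\times S^{n-1}$ (Hölder in $x$ uniformly in $\xi$, and linear, hence Lipschitz, in $\xi$). However $\Theta$ is only locally Hölder on $U$, since the denominator $(F_x\xi)_k$ may vanish on the boundary of $U$. I would therefore exhaust $U$ by the compact sets $U_m=\{(x,\xi):|(F_x\xi)_k|\ge 1/m\}$, on each of which the denominator is bounded away from zero, so that $\Theta$ is genuinely $\alpha$-Hölder there, with $\Theta(U)=\bigcup_{m\ge1}\Theta(U_m)$.

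The dimension count is the heart of the matter. The set $K\times S^{n-1}$ has upper box (hence Hausdorff) dimension at most $d+(n-1)$, so each $U_m$ does as well; since $\Theta$ is $\alpha$-Hölder on $U_m$, its image satisfies $\dim_H\Theta(U_m)\le (d+n-1)/\alpha$. From $\alpha k>d+n$ and $\alpha<1$ we get $\alpha(k-1)=\alpha k-\alpha>d+n-\alpha>d+n-1$, that is $(d+n-1)/\alpha<k-1$. Hence each $\Theta(U_m)$ has Hausdorff dimension strictly below $k-1$ and is therefore Lebesgue-null in $\R^{k-1}$; a countable union of null sets is null, so $|\Theta(U)|_{k-1}=0$, and a fortiori $|\R^{k-1}\setminus G|_{k-1}=0$.

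The one genuine obstacle is exactly what this strategy is built to overcome: $\R^{k-1}\setminus G$ is the uncountable union $\bigcup_{x\in K}\{a:(a,1)\in\ran F_x\}$ of sets that are individually null (each of dimension at most $n-1<k-1$), and such a union can fail to be null. The homogeneity reduction to $S^{n-1}$, combined with the Hölder dimension-distortion estimate, replaces this uncountable union by a single controlled image, and the strictness in the final inequality comes entirely from $\alpha<1$.
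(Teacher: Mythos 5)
Your proof is correct. It rests on the same two pillars as the paper's: the observation that $P_a\circ F_x$ fails to be injective exactly when $(a_1,\dots,a_{k-1},1)\in\ran F_x$, and the fact that an $\alpha$-H\"older map raises Hausdorff dimension by a factor of at most $1/\alpha$. Where you genuinely diverge is in how the affine slice $\{b_k=1\}$ is reached. The paper applies the dimension-distortion estimate to $f(x,v)=F_xv$ on the $(d+n)$-dimensional set $K\times\R^n$, concludes $\mathcal{H}^k(\ran f)=0$ from $\alpha k>d+n$, and then still has to pass from the $\mathcal{H}^k$-nullity of the cone $\R_+\cdot B$ to the $\mathcal{H}^{k-1}$-nullity of the slice $B$, via a change of variables and Tonelli's theorem. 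You projectivize at the outset instead: restricting to $\xi\in S^{n-1}$ and dividing by the last coordinate of $F_x\xi$ yields a map $\Theta$ into $\R^{k-1}$ defined on a $(d+n-1)$-dimensional domain, so the dimension count lands directly in $\R^{k-1}$ and no slicing step is needed. What your route costs is (i) the compact exhaustion $U_m=\{|(F_x\xi)_k|\ge1/m\}$ to deal with the degeneracy of the division, which you handle correctly, and (ii) the extra inequality $\alpha\le1$ needed to convert $\alpha k>d+n$ into $\alpha(k-1)>d+n-1$ --- an ingredient the paper's version never uses, though it is harmless here since $\alpha\in(0,1)$ throughout. One minor quibble: your closing remark that the strictness ``comes entirely from $\alpha<1$'' overstates the point; the chain $\alpha(k-1)=\alpha k-\alpha>d+n-\alpha\ge d+n-1$ is already strict for every $\alpha\le1$, because the first inequality is strict by hypothesis.
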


\begin{proof}
Note that $\ker P_{a}=\span\{(a_{1},\dots,a_{k-1},1)\}$. Thus, since
$F_{x}$ is injective, we have for $x\in K$
\[
\begin{split}P_{a}\circ F_{x}\;\text{is injective} & \iff\ran F_{x}\cap\ker P_{a}=\{0\}\\
 & \iff(a_{1},\dots,a_{k-1},1)\notin\ran F_{x}.
\end{split}
\]
Then, $a\in\R^{k-1}\setminus G$ if and only if there exists $x\in K$
such that $(a_{1},\dots,a_{k-1},1)\in\ran F_{x}$, namely $(a_{1},\dots,a_{k-1},1)\in\cup_{x\in K}\ran F_{x}$.
Therefore, using the projection $\pi\colon\R^{k}\to\R^{k-1}$, $(b_{1},\dots,b_{k})\mapsto(b_{1},\dots,b_{k-1})$,
we can express $\R^{k-1}\setminus G$ as
\[
\R^{k-1}\setminus G=\pi\left(B\right),\qquad B=\biggl(\bigcup_{x\in K}\ran F_{x}\biggr)\cap\left\{ b\in\R^{k}:b_{k}=1\right\} .
\]
Hence, it remains to prove that
\[
\mathcal{H}^{k-1}\left(B\right)=0,
\]
where $\mathcal{H}^{k-1}$ denotes the Hausdorff measure of dimension
$k-1$.

Consider the map
\[
f\colon K\times\R^{n}\to\R^{k},\qquad\left(x,v\right)\mapsto F_{x}v.
\]
By construction, $\ran f=\bigcup_{x\in K}\ran F_{x}$. Note that $f$
is of class $C^{0,\alpha}$ and that $\dim(K\times\R^{n})=d+n<\alpha k$,
so that $\H^{\alpha k}(K\times\R^{n})=0$. By elementary properties
of Hausdorff measures \cite[section~4.1]{alberti-2012-sard}, we have
$\H^{k}(\bigcup_{x\in K}\ran F_{x})=\H^{k}(\ran f)=0$. By linearity
of $v\mapsto F_{x}v$, the set $\bigcup_{x\in K}\ran F_{x}$ is closed
under scalar multiplication, and so $\bigcup_{x\in K}\ran F_{x}\supseteq\R_{+}\cdot B$,
which implies
\[
\H^{k}(\R_{+}\cdot B)=0.
\]
Using the change of variables formula and Tonelli theorem, we deduce
$\H^{k-1}(B)=0$, as desired.
\end{proof}
\begin{lem}
\label{lem:2}Take a smooth and compact set $K\subseteq\overline{\Omega}$
and a positive integer $k>{\frac{d+n}{\alpha}}$. Let $u_{1},\dots,u_{k}\in\H(K)$
be such that
\[
\rank\begin{bmatrix}\zeta\left(u_{1}\right)\\
\vdots\\
\zeta\left(u_{k}\right)
\end{bmatrix}(x)=n,\qquad x\in K.
\]
Let $G\subseteq\R^{k-1}$ be the set of those $a\in\R^{k-1}$ such
that

\[
\rank\begin{bmatrix}\zeta\left(u_{1}-a_{1}u_{k}\right)\\
\zeta\left(u_{2}-a_{2}u_{k}\right)\\
\vdots\\
\zeta\left(u_{k-1}-a_{k-1}u_{k}\right)
\end{bmatrix}(x)=n,\qquad x\in K.
\]
Then $|\R^{k-1}\setminus G|_{k-1}=0$.
\end{lem}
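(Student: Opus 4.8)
The plan is to recognize the claim as a direct consequence of Lemma~\ref{lem:whitney}, once the data $u_1,\dots,u_k$ are repackaged into the matrix-valued function required there. First I would define $F\colon K\to\R^{k\times n}$ by declaring the $i$-th row of $F_x$ to be $\zeta(u_i)(x)$. Because $\zeta$ is continuous and linear and each $u_i\in\H(K)\subseteq C^{\l,\alpha}(K)$, every $\zeta(u_i)$ belongs to $C^{0,\alpha}(K)^n$; hence $F$ is of class $C^{0,\alpha}$, as Lemma~\ref{lem:whitney} demands. Moreover the hypothesis $\rank F_x=n$ for all $x\in K$ says precisely that the linear map $F_x\colon\R^n\to\R^k$ is injective for every $x$, so the standing assumptions of Lemma~\ref{lem:whitney} hold.

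The heart of the argument is to identify the set $G$ in the statement with the set $G$ of Lemma~\ref{lem:whitney}. The key is the algebraic identity $P_a\circ F_x=\bigl[\,\zeta(u_i-a_iu_k)(x)\,\bigr]_{i=1}^{k-1}$. To see it, note that for $v\in\R^n$ the $i$-th component of $F_x v$ is $\zeta(u_i)(x)\cdot v$, so by the definition of $P_a$ and the linearity of $\zeta$,
\[
\bigl((P_a\circ F_x)v\bigr)_i=\zeta(u_i)(x)\cdot v-a_i\,\zeta(u_k)(x)\cdot v=\zeta(u_i-a_iu_k)(x)\cdot v,\qquad i=1,\dots,k-1.
\]
Thus the matrix of $P_a\circ F_x$ is exactly the $(k-1)\times n$ matrix whose $i$-th row is $\zeta(u_i-a_iu_k)(x)$, and $P_a\circ F_x$ is injective if and only if that matrix has rank $n$. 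Consequently the two sets $G$ coincide.

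It then only remains to apply Lemma~\ref{lem:whitney}: since $k>\frac{d+n}{\alpha}$, $F$ is $C^{0,\alpha}$, and $F_x$ is injective for all $x\in K$, we conclude $|\R^{k-1}\setminus G|_{k-1}=0$, which is the desired statement.

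There is no genuine analytic difficulty here; the entire content is bookkeeping. The one step that deserves care—and which I regard as the main (if modest) obstacle—is checking the identity $P_a\circ F_x=\bigl[\,\zeta(u_i-a_iu_k)(x)\,\bigr]_{i}$, which relies on the linearity of $\zeta$ together with a careful matching of conventions (rows indexed by the solutions $u_i$, the $n$ columns by the components of $\zeta$), so that left composition with $P_a$ acts exactly by the substitution $u_i\mapsto u_i-a_iu_k$ on the rows. Once this identification is secured, the claim is merely a reformulation of Lemma~\ref{lem:whitney}.
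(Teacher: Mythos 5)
Your proof is correct and follows exactly the same route as the paper's: define $F_x$ row-wise via $\zeta(u_i)(x)$, observe that the rank hypothesis is injectivity of $F_x$ and that linearity of $\zeta$ gives $P_a\circ F_x=\bigl[\zeta(u_i-a_iu_k)(x)\bigr]_{i=1}^{k-1}$, then invoke Lemma~\ref{lem:whitney}. Nothing is missing; the verification of the key identity is carried out in the same way the paper does it.
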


\begin{proof}
Let $F\colon K\to\R^{k\times n}$ be the map of class $C^{{0,\alpha}}$
defined by
\[
F_{x}=\begin{bmatrix}\zeta\left(u_{1}\right)\\
\vdots\\
\zeta\left(u_{k}\right)
\end{bmatrix}(x)\colon\R^{n}\to\R^{k}.
\]
By assumption, we have that $F_{x}$ is injective for all $x\in K$.
Observe that, by linearity of $\zeta,$ we have
\[
P_{a}\circ F_{x}=\begin{bmatrix}\zeta\left(u_{1}\right)(x)-a_{1}\zeta\left(u_{k}\right)(x)\\
\vdots\\
\zeta\left(u_{k-1}\right)(x)-a_{k-1}\zeta\left(u_{k}\right)(x)
\end{bmatrix}=\begin{bmatrix}\zeta\left(u_{1}-a_{1}u_{k}\right)\\
\vdots\\
\zeta\left(u_{k-1}-a_{k-1}u_{k}\right)
\end{bmatrix}(x)
\]
and so the conclusion immediately follows by Lemma~\ref{lem:whitney}.
\end{proof}
We are now ready to prove Theorem~\ref{thm:general}.
\begin{proof}[\emph{Proof of }Theorem~\ref{thm:general}]
\emph{}

\emph{Step 1: $\mathcal{E}(K)$ is open in $\H(K)^{\left[\frac{d+n}{\alpha}\right]}$.}

Take $u\in\E(K)$. By definition, we have
\[
\sum_{i_{1,}\dots,i_{n}=1}^{\left[\frac{d+n}{\alpha}\right]}|\det\begin{bmatrix}\zeta\left(u_{i_{1}}\right)\\
\vdots\\
\zeta\left(u_{i_{n}}\right)
\end{bmatrix}(x)|>0,\qquad x\in K.
\]
Since $\zeta(u_{i_{j}})$ are $C^{{0,\alpha}}$ maps, and in particular
continuous, and $K$ is compact, we have that
\[
\sum_{i_{1,}\dots,i_{n}=1}^{\left[\frac{d+n}{\alpha}\right]}|\det\begin{bmatrix}\zeta\left(u_{i_{1}}\right)\\
\vdots\\
\zeta\left(u_{i_{n}}\right)
\end{bmatrix}(x)|\ge C,\qquad x\in K,
\]
for some constant $C>0$. Finally, since the map $\zeta$ is continuous
itself, if $v\in\H(K)^{\left[\frac{d+n}{\alpha}\right]}$ is chosen
close enough to $u$ we have
\[
\sum_{i_{1,}\dots,i_{n}=1}^{\left[\frac{d+n}{\alpha}\right]}|\det\begin{bmatrix}\zeta\left(v_{i_{1}}\right)\\
\vdots\\
\zeta\left(v_{i_{n}}\right)
\end{bmatrix}(x)|\ge\frac{C}{2},\qquad x\in K,
\]
which implies $v\in\E(K)$. This concludes the first step.

\emph{Step 2: $\mathcal{E}(K)$ is dense in $\H(K)^{\left[\frac{d+n}{\alpha}\right]}$.}

Take $H=(h_{1},\dots,h_{\left[\frac{d+n}{\alpha}\right]})\in\H(K)^{\left[\frac{d+n}{\alpha}\right]}$.
By assumption, for all $x\in K$ there exist $u_{1,x},\dots,u_{n,x}\in\H(K)$
such that
\[
|\det\begin{bmatrix}\zeta\left(u_{1,x}\right)\\
\vdots\\
\zeta\left(u_{n,x}\right)
\end{bmatrix}(x)|>0.
\]
By continuity of $\zeta\left(u_{i,x}\right)$, there exist neighbourhoods
$U^{x}\ni x$ such that
\[
|\det\begin{bmatrix}\zeta\left(u_{1,x}\right)\\
\vdots\\
\zeta\left(u_{n,x}\right)
\end{bmatrix}(y)|>0,\qquad y\in U^{x}\cap K.
\]
Since $K\subseteq\cup_{x\in K}U^{x}$, by compactness there exist
$x_{1},\dots,x_{N}\in K$ such that $K\subseteq\cup_{j=1}^{N}U^{x_{j}}$.
Thus, for all $x\in K$ there exists $j=1,\dots,N$ such that
\[
|\det\begin{bmatrix}\zeta\left(u_{1,x_{j}}\right)\\
\vdots\\
\zeta\left(u_{n,x_{j}}\right)
\end{bmatrix}(x)|>0.
\]
Consider all the $M=nN$ corresponding solutions
\[
(u_{1},\dots,u_{M})=\left(u_{1,x_{1}},\dots,u_{n,x_{1}},u_{1,x_{2}},\dots,u_{n,x_{2}},\dots,u_{1,x_{N}},\dots,u_{n,x_{N}}\right),
\]
 so that
\[
\rank\begin{bmatrix}\zeta(u_{1})\\
\vdots\\
\zeta(u_{M})
\end{bmatrix}(x)=n,\qquad x\in K.
\]
In particular, we have
\[
\rank\begin{bmatrix}\zeta(h_{1})\\
\vdots\\
\zeta(h_{\left[\frac{d+n}{\alpha}\right]})\\
\zeta(u_{1})\\
\vdots\\
\zeta(u_{M})
\end{bmatrix}(x)=n,\qquad x\in K.
\]

By Lemma~\ref{lem:2}, since $k=\left[\frac{d+n}{\alpha}\right]+M>\left[\frac{d+n}{\alpha}\right]$,
we have $k>\frac{d+n}{\alpha}$, and so for almost every $a\in\R^{k-1}$
we have
\[
\rank\begin{bmatrix}\zeta(h_{1}-a_{1}u_{M})\\
\vdots\\
\zeta(h_{\left[\frac{d+n}{\alpha}\right]}-a_{\left[\frac{d+n}{\alpha}\right]}u_{M})\\
\zeta(u_{1}-a_{\left[\frac{d+n}{\alpha}\right]+1}u_{M})\\
\vdots\\
\zeta(u_{M-1}-a_{k-1}u_{M})
\end{bmatrix}(x)=n,\qquad x\in K.
\]
Repeating this argument $M$ times (as long as $k>\left[\frac{d+n}{\alpha}\right]$)
with very small weights $a$, we obtain that there exist $\xi_{i,j}\in\R$
($i=1,\dots,\left[\frac{d+n}{\alpha}\right]$, $j=1,\dots,M$) which
can be chosen arbitrarily small such that
\[
\rank\begin{bmatrix}\zeta(h_{1}-\xi_{1,j}u_{j})\\
\vdots\\
\zeta(h_{\left[\frac{d+n}{\alpha}\right]}-\xi_{\left[\frac{d+n}{\alpha}\right],j}u_{j})
\end{bmatrix}(x)=n,\qquad x\in K,
\]
where we used Einstein summation convention of repeated indices. This
implies that
\[
(h_{1}-\xi_{1,j}u_{j},\dots,h_{\left[\frac{d+n}{\alpha}\right]}-\xi_{\left[\frac{d+n}{\alpha}\right],j}u_{j})\in\E(K),
\]
which, since the weights $\xi_{i,j}$ are chosen arbitrarily small,
concludes the proof.
\end{proof}
\bibliographystyle{abbrv}
\bibliography{rungebiblio}

\begin{thebibliography}{10}

\bibitem{alberti-2012-sard}
G.~Alberti.
\newblock Generalized {N}-property and {S}ard theorem for {S}obolev maps.
\newblock {\em Atti Accad. Naz. Lincei Rend. Lincei Mat. Appl.},
  23(4):477--491, 2012.

\bibitem{alberti-2013}
G.~S. Alberti.
\newblock On multiple frequency power density measurements.
\newblock {\em Inverse Problems}, 29(11):115007, 25, 2013.

\bibitem{alberti-2015b}
G.~S. Alberti.
\newblock Enforcing local non-zero constraints in {PDE}s and applications to
  hybrid imaging problems.
\newblock {\em Comm. Partial Differential Equations}, 40(10):1855--1883, 2015.

\bibitem{alberti-2015}
G.~S. Alberti.
\newblock On multiple frequency power density measurements {II}. {T}he full
  {M}axwell's equations.
\newblock {\em J. Differential Equations}, 258(8):2767--2793, 2015.

\bibitem{alberti-genericity}
G.~S. Alberti.
\newblock Absence of {C}ritical {P}oints of {S}olutions to the {H}elmholtz
  {E}quation in 3{D}.
\newblock {\em Arch. Ration. Mech. Anal.}, 222(2):879--894, 2016.

\bibitem{alberti-ammari-ruan-2016}
G.~S. Alberti, H.~Ammari, and K.~Ruan.
\newblock Multi-frequency acousto-electromagnetic tomography.
\newblock In {\em A panorama of mathematics: pure and applied}, volume 658 of
  {\em Contemp. Math.}, pages 67--79. Amer. Math. Soc., Providence, RI, 2016.

\bibitem{alberti-bal-dicristo-2016}
G.~S. Alberti, G.~Bal, and M.~Di~Cristo.
\newblock Critical points for elliptic equations with prescribed boundary
  conditions.
\newblock {\em Arch. Ration. Mech. Anal.}, 226(1):117--141, 2017.

\bibitem{capalb-analytic}
G.~S. Alberti and Y.~Capdeboscq.
\newblock {On local non-zero constraints in PDE with analytic coefficients}.
\newblock In {\em Imaging, Multi-scale and High Contrast Partial Differential
  Equations}, volume 660 of {\em Contemp. Math.}, pages 89--97. Amer. Math.
  Soc., Providence, RI, 2016.

\bibitem{alberti-capdeboscq-2018}
G.~S. Alberti and Y.~Capdeboscq.
\newblock {\em Lectures on elliptic methods for hybrid inverse problems},
  volume~25 of {\em Cours Sp\'{e}cialis\'{e}s [Specialized Courses]}.
\newblock Soci\'{e}t\'{e} Math\'{e}matique de France, Paris, 2018.

\bibitem{alessandrini-2014}
G.~Alessandrini.
\newblock Global stability for a coupled physics inverse problem.
\newblock {\em Inverse Problems}, 30(7):075008, 10, 2014.

\bibitem{alessandrini-magnanini-1994}
G.~Alessandrini and R.~Magnanini.
\newblock {Elliptic equations in divergence form, geometric critical points of
  solutions, and {S}tekloff eigenfunctions}.
\newblock {\em SIAM J. Math. Anal.}, 25(5):1259--1268, 1994.

\bibitem{ALESSANDRINI-NESI-01}
G.~Alessandrini and V.~Nesi.
\newblock {Univalent {$\sigma$}-harmonic mappings}.
\newblock {\em Arch. Ration. Mech. Anal.}, 158(2):155--171, 2001.

\bibitem{MR3364666}
G.~Alessandrini and V.~Nesi.
\newblock Estimates for the dilatation of {$\sigma$}-harmonic mappings.
\newblock {\em Rend. Mat. Appl. (7)}, 35(3-4):215--225, 2014.

\bibitem{ALESSANDRINI-NESI-2015}
G.~Alessandrini and V.~Nesi.
\newblock {Quantitative estimates on Jacobians for hybrid inverse problems}.
\newblock {\em Bulletin of the South Ural State University. Series
  ``Mathematical Modelling, Programming \& Computer Software''}, 8(3):25--41,
  2015.

\bibitem{alessandrini-rondi-rosset-vessella-2009}
G.~Alessandrini, L.~Rondi, E.~Rosset, and S.~Vessella.
\newblock The stability for the {C}auchy problem for elliptic equations.
\newblock {\em Inverse Problems}, 25(12):123004, 47, 2009.

\bibitem{AMMARI-2008}
H.~Ammari.
\newblock {\em An introduction to mathematics of emerging biomedical imaging},
  volume~62 of {\em Math\'ematiques \& Applications (Berlin) [Mathematics \&
  Applications]}.
\newblock Springer, Berlin, 2008.

\bibitem{ABCTF-SIAP-08}
H.~Ammari, E.~Bonnetier, Y.~Capdeboscq, M.~Tanter, and M.~Fink.
\newblock Electrical impedance tomography by elastic deformation.
\newblock {\em SIAM J. Appl. Math.}, 68:1557--1573, 2008.

\bibitem{AMMARI-CAPDEBOSCQ-DEGOURNAY-ROZANOVA-TRIKI-2011}
H.~Ammari, Y.~Capdeboscq, F.~de~Gournay, A.~Rozanova-Pierrat, and F.~Triki.
\newblock {Microwave imaging by elastic deformation}.
\newblock {\em SIAM J. Appl. Math.}, 71(6):2112--2130, 2011.

\bibitem{ammari-2013}
H.~Ammari, J.~Garnier, W.~Jing, and L.~H. Nguyen.
\newblock Quantitative thermo-acoustic imaging: an exact reconstruction
  formula.
\newblock {\em J. Differential Equations}, 254(3):1375--1395, 2013.

\bibitem{ammari-2016}
H.~Ammari, J.~Garnier, H.~Kang, L.~H. Nguyen, and L.~Seppecher.
\newblock Mathematics of super-resolution biomedical imaging.
\newblock Technical Report 2016-31, Seminar for Applied Mathematics, ETH
  Z{\"u}rich, Switzerland, 2016.

\bibitem{AMMARI-GIOVANGIGLI-NGUYEN-SEO}
H.~Ammari, L.~Giovangigli, L.~H. Nguyen, and J.-K. Seo.
\newblock Admittivity imaging from multi-frequency micro-electrical impedance
  tomography.
\newblock {\em J. Math. Anal. Appl.}, 449(2):1601--1618, 2017.

\bibitem{bal-2012-arxiv}
G.~{Bal}.
\newblock {Explicit Reconstructions in QPAT, QTAT, TE, and MRE}.
\newblock {\em ArXiv e-prints}, 2012.

\bibitem{BAL-2012}
G.~Bal.
\newblock Hybrid inverse problems and internal functionals.
\newblock In {\em Inverse problems and applications: inside out. {II}},
  volume~60 of {\em Math. Sci. Res. Inst. Publ.}, pages 325--368. Cambridge
  Univ. Press, Cambridge, 2013.

\bibitem{bal-bellis-imperiale-monard-2014}
G.~Bal, C.~Bellis, S.~Imperiale, and F.~Monard.
\newblock Reconstruction of constitutive parameters in isotropic linear
  elasticity from noisy full-field measurements.
\newblock {\em Inverse Problems}, 30(12):125004, 22, 2014.

\bibitem{BAL-BONNETIER-MONARD-TRIKI-2013}
G.~Bal, E.~Bonnetier, F.~Monard, and F.~Triki.
\newblock {Inverse diffusion from knowledge of power densities}.
\newblock {\em Inverse Probl. Imaging}, 7(2):353--375, 2013.

\bibitem{BAL20131357}
G.~Bal and M.~Courdurier.
\newblock Boundary control of elliptic solutions to enforce local constraints.
\newblock {\em Journal of Differential Equations}, 255(6):1357 -- 1381, 2013.

\bibitem{bal-guo-monard-2014}
G.~Bal, C.~Guo, and F.~Monard.
\newblock {Inverse anisotropic conductivity from internal current densities}.
\newblock {\em Inverse Problems}, 30(2):025001, 21, 2014.

\bibitem{bal-monard-uhlmann-2015}
G.~Bal, F.~Monard, and G.~Uhlmann.
\newblock Reconstruction of a fully anisotropic elasticity tensor from
  knowledge of displacement fields.
\newblock {\em SIAM J. Appl. Math.}, 75(5):2214--2231, 2015.

\bibitem{BAL-REN-2011}
G.~Bal and K.~Ren.
\newblock {Multi-source quantitative photoacoustic tomography in a diffusive
  regime}.
\newblock {\em Inverse Problems}, 27(7):075003, 20, 2011.

\bibitem{BAL-REN-UHLMANN-ZHOU-2011}
G.~Bal, K.~Ren, G.~Uhlmann, and T.~Zhou.
\newblock Quantitative thermo-acoustics and related problems.
\newblock {\em Inverse Problems}, 27(5):055007, 15, 2011.

\bibitem{BAL-SCHOTLAND-2014}
G.~Bal and J.~C. Schotland.
\newblock {Ultrasound-modulated bioluminescence tomography}.
\newblock {\em Phys. Rev. E}, 89:031201, Mar 2014.

\bibitem{BU-IP-10}
G.~Bal and G.~Uhlmann.
\newblock Inverse diffusion theory for photoacoustics.
\newblock {\em Inverse Problems}, 26(8):085010, 2010.

\bibitem{BAL-UHLMANN-2010}
G.~Bal and G.~Uhlmann.
\newblock Inverse diffusion theory of photoacoustics.
\newblock {\em Inverse Problems}, 26(8):085010, 2010.

\bibitem{bal-uhlmann-reconstructions-2012}
G.~Bal and G.~Uhlmann.
\newblock Reconstructions for some coupled-physics inverse problems.
\newblock {\em Appl. Math. Lett.}, 25(7):1030--1033, 2012.

\bibitem{BAL-UHLMANN-2013}
G.~Bal and G.~Uhlmann.
\newblock {Reconstruction of coefficients in scalar second-order elliptic
  equations from knowledge of their solutions}.
\newblock {\em Comm. Pure Appl. Math.}, 66(10):1629--1652, 2013.

\bibitem{bal-zhou-2014}
G.~Bal and T.~Zhou.
\newblock Hybrid inverse problems for a system of {M}axwell's equations.
\newblock {\em Inverse Problems}, 30(5):055013, 17, 2014.

\bibitem{BALL-CAPDEBOSCQ-TSERING-2012}
J.~M. Ball, Y.~Capdeboscq, and B.~Tsering-Xiao.
\newblock On uniqueness for time harmonic anisotropic {M}axwell's equations
  with piecewise regular coefficients.
\newblock {\em Math. Models Methods Appl. Sci.}, 22(11):1250036, 11, 2012.

\bibitem{barbone-etal-2010}
P.~E. Barbone, C.~E. Rivas, I.~Harari, U.~Albocher, A.~A. Oberai, and Y.~Zhang.
\newblock Adjoint-weighted variational formulation for the direct solution of
  inverse problems of general linear elasticity with full interior data.
\newblock {\em Internat. J. Numer. Methods Engrg.}, 81(13):1713--1736, 2010.

\bibitem{CAPDEBOSCQ-15}
Y.~Capdeboscq.
\newblock {On a counter-example to quantitative Jacobian bounds}.
\newblock {\em J. \'Ec. polytech. Math.}, 2:171--178, 2015.

\bibitem{CAPDEBOSCQ-FEHRENBACH-DEGOURNAY-KAVIAN-09}
Y.~Capdeboscq, J.~Fehrenbach, F.~de~Gournay, and O.~Kavian.
\newblock Imaging by modification: numerical reconstruction of local
  conductivities from corresponding power density measurements.
\newblock {\em SIAM J. Imaging Sci.}, 2(4):1003--1030, 2009.

\bibitem{cekic2018calder}
M.~Ceki{\'c}, Y.-H. Lin, and A.~R{\"u}land.
\newblock {The Calder\'on problem for the fractional Schr\" odinger equation
  with drift}.
\newblock {\em arXiv preprint arXiv:1810.04211}, 2018.

\bibitem{Giaquinta2005}
M.~Giaquinta and L.~Martinazzi.
\newblock {\em An introduction to the regularity theory for elliptic systems,
  harmonic maps and minimal graphs}, volume~11 of {\em Appunti. Scuola Normale
  Superiore di Pisa (Nuova Serie) [Lecture Notes. Scuola Normale Superiore di
  Pisa (New Series)]}.
\newblock Edizioni della Normale, Pisa, second edition, 2012.

\bibitem{GILBARG-2001}
D.~Gilbarg and N.~S. Trudinger.
\newblock {\em Elliptic partial differential equations of second order}.
\newblock Classics in Mathematics. Springer-Verlag, Berlin, 2001.
\newblock Reprint of the 1998 edition.

\bibitem{Greene-Wu-1975b}
R.~E. Greene and H.~Wu.
\newblock Embedding of open {R}iemannian manifolds by harmonic functions.
\newblock {\em Ann. Inst. Fourier (Grenoble)}, 25(1, vii):215--235, 1975.

\bibitem{Greene-Wu-1975a}
R.~E. Greene and H.~Wu.
\newblock Whitney's imbedding theorem by solutions of elliptic equations and
  geometric consequences.
\newblock In {\em Differential geometry ({P}roc. {S}ympos. {P}ure {M}ath.,
  {V}ol. {XXVII}, {P}art 2, {S}tanford {U}niv., {S}tanford, {C}alif., 1973)},
  pages 287--296. Amer. Math. Soc., Providence, R. I., 1975.

\bibitem{BAL-GUO-2013}
C.~Guo and G.~Bal.
\newblock Reconstruction of complex-valued tensors in the {M}axwell system from
  knowledge of internal magnetic fields.
\newblock {\em Inverse Probl. Imaging}, 8(4):1033--1051, 2014.

\bibitem{hasanov-2004}
K.~Hasanov, A.~Ma, R.~Yoon, A.~Nachman, and M.~Joy.
\newblock A new approach to current density impedance imaging.
\newblock In {\em Engineering in Medicine and Biology Society, 2004. IEMBS '04.
  26th Annual International Conference of the IEEE}, volume~1, pages
  1321--1324, Sept 2004.

\bibitem{KOCYIGIT-2012}
I.~Kocyigit.
\newblock {Acousto-electric tomography and CGO solutions with internal data}.
\newblock {\em Inverse Problems}, 28(12):125004, 2012.

\bibitem{kuchment-2012}
P.~Kuchment.
\newblock Mathematics of hybrid imaging: a brief review.
\newblock In {\em The mathematical legacy of {L}eon {E}hrenpreis}, volume~16 of
  {\em Springer Proc. Math.}, pages 183--208. Springer, Milan, 2012.

\bibitem{KS-IP-12}
P.~Kuchment and D.~Steinhauer.
\newblock Stabilizing inverse problems by internal data.
\newblock {\em Inverse Problems}, 28(8):084007, 2012.

\bibitem{kuchment-2015}
P.~Kuchment and D.~Steinhauer.
\newblock Stabilizing inverse problems by internal data. {II}: non-local
  internal data and generic linearized uniqueness.
\newblock {\em Anal. Math. Phys.}, 5(4):391--425, 2015.

\bibitem{lai-2014}
R.-Y. Lai.
\newblock Uniqueness and stability of {L}am\'e parameters in elastography.
\newblock {\em J. Spectr. Theory}, 4(4):841--877, 2014.

\bibitem{LAUGESEN-96}
R.~S. Laugesen.
\newblock Injectivity can fail for higher-dimensional harmonic extensions.
\newblock {\em Complex Variables Theory Appl.}, 28(4):357--369, 1996.

\bibitem{LAX-1956}
P.~D. Lax.
\newblock {A stability theorem for solutions of abstract differential
  equations, and its application to the study of the local behavior of
  solutions of elliptic equations}.
\newblock {\em Comm. Pure Appl. Math.}, 9:747--766, 1956.

\bibitem{lee-2004}
J.-Y. Lee.
\newblock A reconstruction formula and uniqueness of conductivity in {MREIT}
  using two internal current distributions.
\newblock {\em Inverse Problems}, 20(3):847--858, 2004.

\bibitem{mclaughlin-zhang-manduca-2010}
J.~R. McLaughlin, N.~Zhang, and A.~Manduca.
\newblock Calculating tissue shear modules and pressure by 2{D}
  log-elastographic methods.
\newblock {\em Inverse Problems}, 26(8):085007, 25, 2010.

\bibitem{2012-MONARD-BAL-2}
F.~Monard and G.~Bal.
\newblock Inverse anisotropic diffusion from power density measurements in two
  dimensions.
\newblock {\em Inverse Problems}, 28(8):084001, 20, 2012.

\bibitem{BAL-MONARD-2012}
F.~Monard and G.~Bal.
\newblock Inverse diffusion problems with redundant internal information.
\newblock {\em Inverse Probl. Imaging}, 6(2):289--313, 2012.

\bibitem{MONARD-BAL-2013}
F.~Monard and G.~Bal.
\newblock {Inverse anisotropic conductivity from power densities in dimension
  {$n\geq3$}}.
\newblock {\em Comm. Partial Differential Equations}, 38(7):1183--1207, 2013.

\bibitem{morrey-2008}
C.~B. Morrey, Jr.
\newblock {\em Multiple integrals in the calculus of variations}.
\newblock Classics in Mathematics. Springer-Verlag, Berlin, 2008.
\newblock Reprint of the 1966 edition [MR0202511].

\bibitem{NTT-Rev-11}
A.~Nachman, A.~Tamasan, and A.~Timonov.
\newblock Current density impedance imaging.
\newblock {\em Tomography and Inverse Transport Theory. Contemporary
  Mathematics (G. Bal, D. Finch, P. Kuchment, P. Stefanov, G. Uhlmann,
  Editors)}, 559:035014, 2011.

\bibitem{seo-kim-etal-2012}
J.~K. Seo, D.~Kim, J.~Lee, O.~I. Kwon, S.~Z.~K. Sajib, and E.~J. Woo.
\newblock Electrical tissue property imaging using {MRI} at dc and {L}armor
  frequency.
\newblock {\em Inverse Problems}, 28(8):084002, 26, 2012.

\bibitem{TRIKI-2010}
F.~Triki.
\newblock {Uniqueness and stability for the inverse medium problem with
  internal data}.
\newblock {\em Inverse Problems}, 26(9):095014, 11, 2010.

\bibitem{whitney-1937}
H.~Whitney.
\newblock Analytic coordinate systems and arcs in a manifold.
\newblock {\em Ann. of Math. (2)}, 38(4):809--818, 1937.

\bibitem{WIDLAK-SCHERZER-2012}
T.~Widlak and O.~Scherzer.
\newblock Hybrid tomography for conductivity imaging.
\newblock {\em Inverse Problems}, 28(8):084008, 28, 2012.

\bibitem{WOOD-1991}
J.~C. Wood.
\newblock Lewy's theorem fails in higher dimensions.
\newblock {\em Math. Scand.}, 69(2):166 (1992), 1991.

\end{thebibliography}

\end{document}